\newtheorem{theorem}{Theorem}
\title{\LARGE \bf
Sparse preconditioning for model predictive control
}
\author{Andrew Knyazev$^{1}$ and Alexander Malyshev$^{2}$
\thanks{$^{1}$Andrew Knyazev is with Mitsubishi Electric Research Laboratories (MERL)
201 Broadway, 8th floor, Cambridge, MA 02139, USA
        {\tt\small knyazev@merl.com}}%
\thanks{$^{2}$Alexander Malyshev is with the Mitsubishi Electric Research Laboratories (MERL)
201 Broadway, 8th floor, Cambridge, MA 02139, USA
        {\tt\small malyshev@merl.com}}%
}
\begin{document}

\maketitle
\thispagestyle{empty}
\pagestyle{empty}

\begin{abstract}

We propose fast $O(N)$ preconditioning, where $N$ is the number of gridpoints
on the prediction horizon, for iterative solution of (non)-linear systems
appearing in model predictive control methods such as forward-difference
Newton-Krylov methods.
The Continuation/GMRES method for nonlinear model predictive control,
suggested by T.~Ohtsuka in 2004, is a specific application of
the Newton-Krylov method, which uses the GMRES iterative algorithm to solve
a forward difference approximation of the optimality equations on every time step.

\end{abstract}

\section{Introduction}
\label{sec:intro}

The paper deals with novel sparse preconditioning for model predictive control
using, as a specific example, the Continuation/GMRES method for on-line nonlinear model
predictive control suggested by T.~Ohtsuka in~\cite{Oht:04}. The method becomes
popular in solving industrial applications; see, e.g. \cite{HuNaBuKo:15}.
The paper \cite{KnMa:15b} gives guidelines how to use the method in cases,
when the system dynamics obeys a geometric structure, e.g. the symplectic
one, or when the state lies on a smooth manifold. The structure-preserving solver
may increase accuracy of the numerical solution. The paper \cite{KnMa:15c}
treats the problems with the particle solutions for nonlinear MPC using
Continuation/GMRES.

The Continuation/GMRES method is based on Newton-type optimization schemes.
The exact Newton method requires an analytic expression of a corresponding
Jacobian matrix, which is rarely available in practice and is often replaced
with a forward difference (FD) approximation; see, e.g., \cite{Kel:95}.
Such approximate Newton-type optimization schemes utilize the FD approximation
of the original nonlinear equation at every time step.
T.~Ohtsuka uses the GMRES algorithm to solve a finite-difference approximation
$Ax=b$ to the optimality conditions. To cope with possible
ill-conditioning of $A$, the authors of \cite{TaOh:04} propose
a preconditioning strategy, which proved to be not very efficient.

In \cite{KnFuMa:15} and \cite{KnMa:15a}, we systematically search for better preconditioners to
accelerate the GMRES and MINRES convergence in the C/GMRES method. In the present paper,
we propose a sparse efficient $O(N)$ preconditioner for this method,
where $N$ is the number of gridpoints on the prediction horizon.

Another popular approach to numerical solution of MPC problems is developed
in \cite{RaWR:98}, \cite{ShKeCo:10a,ShKeCo:10b}, \cite{WaBo:10} and based
on the interior-point method. The authors of \cite{WaBo:10} develop a direct
method for a linear MPC model with the $O(N)$ arithmetic complexity.
The papers \cite{ShKeCo:10a,ShKeCo:10b} apply the MINRES iteration with
special preconditioners to similar linear MPC problems and prove the $O(N)$
arithmetical complexity of the preconditioned iteration.
In contrast to the above methods, which use the Newton or quasi-Newton approximations,
the recent papers \cite{Gi:13} and \cite{KoFePeDi:15} investigate performance
of the first-order methods and their Nesterov's acceleration.

Our proposed preconditioning technique is essentially based on two ideas.
The symmetric matrix $A$ is a Schur complement of the Hessian of the Lagrangian
function, associated with the model prediction problem. Apart from a few rows
and columns, the preconditioner $M=LU$ is a sort of an incomplete LU factorization
\cite{Ax:94} of the Schur complement without these exceptional rows and columns.
This results in sparse $M$ and its factors, $L$ and $U$, having only $O(N)$
nonzero entries. On the one hand, application of this preconditioner
is almost as fast as that of a diagonal preconditioner.
On the other hand, our preconditioner has high quality leading to fast convergence
of the iterative method, such as GMRES.

The rest of the paper is organized as follows. Section~\ref{sec:predict}
derives the nonlinear equation (\ref{e7}), which solves the model prediction problem,
following \cite{KnFuMa:15,Oht:04}.
We prove the symmetry of the Jacobian matrix for the function defining (\ref{e7})
in Theorem~\ref{th1}. Section~\ref{sec:algo} describes the continuation method for solving
(\ref{e7}). Section~\ref{sec:gmres} formulates the preconditioned GMRES, as in
\cite{KnFuMa:15}. Section~\ref{sec:sprec} describes our new preconditioner.
The preconditioner construction is the main result of the paper.
Section~\ref{sec:ex} illustrates all details of the preconditioner setup
on a representative example. Section~\ref{sec:numer} displays plots of numerical results.

\section{Model prediction problem}
\label{sec:predict}

The model predictive control (MPC) method solves a finite horizon
prediction problem along a fictitious time $\tau\in[t,t+T]$.
Our model finite horizon problem consists, following \cite{KnFuMa:15,Oht:04},
in choosing the control $u(\tau)$
and parameter vector $p$, which minimize the performance index $J$ as follows:
\[
\min_{u,p} J,
\]
where
\[
J = \phi(x(\tau),p)|_{\tau=t+T}+\int_t^{t+T}L(\tau,x(\tau),u(\tau),p)d\tau
\]
subject to the equation for the state dynamics
\begin{equation}\label{e1}
\frac{dx}{d\tau}=f(\tau,x(\tau),u(\tau),p),
\end{equation}
and the constraints
\begin{equation}\label{e2}
C(\tau,x(\tau),u(\tau),p) = 0,
\end{equation}
\begin{equation}\label{e3}
\psi(x(\tau),p)|_{\tau=t+T} = 0.
\end{equation}
The initial value condition $x(\tau)|_{\tau=t}$ for (\ref{e1}) is the state vector $x(t)$
of the dynamic system. The control vector $u=u(\tau)$, solving the problem over the horizon,
is used as an input to control the system at time $t$. The components of the vector $p(t)$
are parameters of the system. Equation (\ref{e1}) describes the system dynamic
that may be nonlinear in $x$ and $u$. Equations (\ref{e2}) and (\ref{e3}) give equality
constraints for the state $x$ and the control $u$. The horizon time length $T$
may in principle also depend on $t$.

The continuous formulation of the finite horizon problem stated above is discretized
on a uniform, for simplicity of presentation, time grid over the horizon $[t,t+T]$
partitioned into $N$ time
steps of size $\Delta\tau$, and the time-continuous vector functions $x(\tau)$ and $u(\tau)$ are
replaced by their indexed values $x_i$ and $u_i$ at the grid points $\tau_i$,
$i=0,1,\ldots,N$. The integral of the performance cost $J$ over the horizon is approximated by the rectangular quadrature rule. The time derivative of the state vector is approximated by the forward difference formula. The discretized optimal control problem is formulated as follows:
\[
\min_{u_i,p}\left[
\phi(x_N,p) + \sum_{i=0}^{N-1}L(\tau_i,x_i,u_i,p)\Delta\tau\right],
\]
subject to
\begin{equation}\label{e4}
\quad x_{i+1} = x_i + f(\tau_i,x_i,u_i,p)\Delta\tau,\quad i = 0,1,\ldots,N-1,
\end{equation}
\begin{equation}\label{e5}
C(\tau_i,x_i,u_i,p) = 0,\quad  i = 0,1,\ldots,N-1,
\end{equation}
\begin{equation}\label{e6}
\psi(x_N,p) = 0.
\end{equation}

The necessary optimality conditions for the discretized finite horizon problem
are obtained by means of the discrete Lagrangian function
\begin{eqnarray*}
&&\mathcal{L}(X,U)=\phi(x_N,p)+\sum_{i=0}^{N-1}
L(\tau_i,x_i,u_i,p)\Delta\tau\\
&&+\,\lambda_0^T[x(t)-x_0]\\
&&+\sum_{i=0}^{N-1}\lambda_{i+1}^T[x_i-x_{i+1}+f(\tau_i,x_i,u_i,p)\Delta\tau]\\
&&+\sum_{i=0}^{N-1}\mu_i^TC(\tau_i,x_i,u_i,p)\Delta\tau+\nu^T\psi(x_N,p),
\end{eqnarray*}
where $X = [x_i\; \lambda_i]^T$, $i=0,1,\ldots,N$, and
$U = [u_i\; \mu_i\; \nu\; p]^T$, $i=0,1,\ldots,N-1$.
Here, $\lambda$ is the costate vector, $\mu$ is the Lagrange multiplier vector 
associated with the constraint~(\ref{e5}). The terminal constraint (\ref{e6})
is relaxed by the aid of the Lagrange multiplier $\nu$. For further covenience,
we also introduce the Hamiltonian function
\begin{eqnarray*}
\lefteqn{H(t,x,\lambda,u,\mu,p) = L(t,x,u,p)}\hspace*{6em}\\
&&{}+\lambda^Tf(t,x,u,p)+\mu^TC(t,x,u,p).
\end{eqnarray*}

The necessary optimality conditions are the (KKT) stationarity conditions:
$\mathcal{L}_{\lambda_i}=0$, $\mathcal{L}_{x_i}=0$, $i=0,1,\ldots,N$,
$\mathcal{L}_{u_j}=0$, $\mathcal{L}_{\mu_j}=0$, $i=0,1,\ldots,N-1$,
$\mathcal{L}_{\nu_k}=0$, $\mathcal{L}_{p_l}=0$.

The KKT conditions are reformulated in terms of a mapping $F[U,x,t]$,
where the vector $U$ combines the control input $u$, the Lagrange multiplier
$\mu$, the Lagrange multiplier $\nu$, and the parameter $p$, all in one vector:
\[
U(t)=[u_0^T,\ldots,u_{N-1}^T,\mu_0^T,\ldots,\mu_{N-1}^T,\nu^T,p^T]^T. 
\]
The vector argument $x$ in $F[U,x,t]$ denotes the current measured or estimated 
state vector, which serves as the initial vector $x_0$ in the following procedure.
\begin{enumerate}
\item Starting from the current measured or estimated state $x_0$, compute
$x_i$, $i=0,1\ldots,N-1$, by the forward recursion
\[
x_{i+1} = x_i + f(\tau_i,x_i,u_i,p)\Delta\tau.
\]
Then starting from
\[
\lambda_N=\frac{\partial\phi^T}{\partial x}(x_N,p)+
 \frac{\partial\psi^T}{\partial x}(x_N,p)\nu
\]
compute the costates $\lambda_i$, $i=N,N\!-\!1,\ldots,1$, by the backward recursion
\[
\lambda_i=\lambda_{i+1}+\frac{\partial H^T}{\partial x}
(\tau_i,x_i,\lambda_{i+1},u_i,\mu_i,p)\Delta\tau.
\]
\item Calculate $F[U,x,t]$, using just obtained $x_i$ and $\lambda_i$, as
\begin{eqnarray*}
\lefteqn{F[U,x,t]}\\
&&\hspace*{-2em}=\left[\begin{array}{c}\begin{array}{c}
\frac{\partial H^T}{\partial u}(\tau_0,x_0,\lambda_{1},u_0,\mu_0,p)\Delta\tau\\
\vdots\\\frac{\partial H^T}{\partial u}(\tau_i,x_i,\lambda_{i+1},u_i,\mu_i,p)\Delta\tau\\
\vdots\\\frac{\partial H^T}{\partial u}(\tau_{N-1},x_{N-1},\lambda_{N},u_{N-1},
\mu_{N-1},p)\Delta\tau\end{array}\\\;\\
\begin{array}{c}C(\tau_0,x_0,u_0,p)\Delta\tau\\
\vdots\\C(\tau_i,x_i,u_i,p)\Delta\tau\\\vdots\\
C(\tau_{N-1},x_{N-1},u_{N-1},p)\Delta\tau\end{array}\\\;\\
\psi(x_N,p)\\[2ex]
\begin{array}{c}\frac{\partial\phi^T}{\partial p}(x_N,p)+
\frac{\partial\psi^T}{\partial p}(x_N,p)\nu\\
+\sum_{i=0}^{N-1}\frac{\partial H^T}{\partial p}(\tau_i,x_i,
\lambda_{i+1},u_i,\mu_i,p)\Delta\tau\end{array}
\end{array}\right].
\end{eqnarray*}
\end{enumerate}

The equation with respect to the unknown vector $U(t)$
\begin{equation}\label{e7}
 F[U(t),x(t),t]=0
\end{equation}
gives the required necessary optimality conditions.

\begin{theorem}\label{th1}
The Jacobian matrix $F_U[U,x,t]$ is symmetric for all $U$, $x$, and $t$.
\end{theorem}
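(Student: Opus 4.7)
My plan is to exhibit $F$ as the $U$-gradient of a scalar reduced Lagrangian; $F_U$ is then a Hessian and hence symmetric. I would organize the argument in three steps.

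First, I would match the blocks of $F[U,x,t]$ displayed in the excerpt with partial derivatives of the discrete Lagrangian $\mathcal{L}(X,U)$ with respect to the components of $U=(u_0,\ldots,u_{N-1},\mu_0,\ldots,\mu_{N-1},\nu,p)$. Using $H=L+\lambda^Tf+\mu^TC$, a direct inspection shows that the four blocks of $F$ are, respectively, $\mathcal{L}_{u_i}^T$, $\mathcal{L}_{\mu_i}^T$, $\mathcal{L}_\nu^T$, and $\mathcal{L}_p^T$, all evaluated along the trajectory $X(U,x)$ generated by the forward state recursion and the backward costate recursion in the procedure preceding~(\ref{e7}).

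Second, I would verify that this trajectory $X(U,x)$ satisfies $\mathcal{L}_X(X,U)=0$ identically in $(U,x)$. Differentiating $\mathcal{L}$ with respect to $\lambda_{i+1}$, $0\le i<N$, recovers the discrete state equation~(\ref{e4}); differentiating with respect to $x_i$, $0<i<N$, recovers the costate recursion in the procedure; and differentiating with respect to $x_N$ recovers the terminal condition $\lambda_N=\phi_x^T+\psi_x^T\nu$. The only stationarity equation of $\mathcal{L}$ not used by the procedure is $\mathcal{L}_{x_0}=0$, which merely determines the dummy multiplier $\lambda_0$; since $\lambda_0$ does not appear in $F$, it may be discarded (equivalently, one pre-substitutes $x_0=x(t)$ into $\mathcal{L}$ and drops the initial-condition term $\lambda_0^T[x(t)-x_0]$).

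Third, I would set $\widetilde{\mathcal{L}}(U,x,t)=\mathcal{L}(X(U,x),U)$. By the chain rule and the identity $\mathcal{L}_X(X(U,x),U)\equiv 0$ established in Step~2, the envelope argument yields $\nabla_U\widetilde{\mathcal{L}}=\mathcal{L}_U^T\big|_{X=X(U,x)}=F[U,x,t]$. Therefore $F_U(U,x,t)=\nabla_U^2\widetilde{\mathcal{L}}(U,x,t)$ is a Hessian of a smooth scalar function and is hence symmetric. The main obstacle is the index bookkeeping in Step~2: the telescoping sum $\sum_{i=0}^{N-1}\lambda_{i+1}^T(x_i-x_{i+1}+f\Delta\tau)$ has to be differentiated carefully so that the cross-index contributions $-\lambda_j+\lambda_{j+1}+H_x^T\Delta\tau$ at each interior $x_j$ reproduce the stated costate recursion with the correct signs, and so that the boundary indices $x_0$ and $x_N$ yield, respectively, the discarded equation for $\lambda_0$ and the terminal condition on $\lambda_N$. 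Once this bookkeeping is verified, the symmetry of $F_U$ follows from the envelope/Hessian argument above without any assumption on $f$, $L$, $C$, $\phi$, $\psi$ beyond twice continuous differentiability.
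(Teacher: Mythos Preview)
Your proof is correct and shares its core idea with the paper's: both recognize that $F[U]=\mathcal{L}_U(g(U),U)$, where $X=g(U)$ is the trajectory produced by the forward/backward recursions and satisfies $\mathcal{L}_X(g(U),U)=0$. The difference lies only in how symmetry is concluded. You apply the envelope theorem to obtain $F=\nabla_U\widetilde{\mathcal{L}}$ for the reduced Lagrangian $\widetilde{\mathcal{L}}(U)=\mathcal{L}(g(U),U)$, so that $F_U=\nabla_U^2\widetilde{\mathcal{L}}$ is a Hessian and hence symmetric. The paper instead differentiates the identity $\mathcal{L}_X(g(U),U)=0$ to get $g_U=-\mathcal{L}_{XX}^{-1}\mathcal{L}_{XU}$ and arrives at the explicit Schur complement formula $F_U=\mathcal{L}_{UU}-\mathcal{L}_{UX}\mathcal{L}_{XX}^{-1}\mathcal{L}_{XU}$, then invokes the symmetry of Schur complements of symmetric matrices. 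Your route is slightly cleaner for the bare symmetry claim, since it needs only smoothness of the explicit recursions and not invertibility of $\mathcal{L}_{XX}$. The paper's explicit formula, however, is not incidental: it is reused in Section~\ref{sec:sprec}, where dropping the second term $\mathcal{L}_{UX}\mathcal{L}_{XX}^{-1}\mathcal{L}_{XU}$ motivates the sparse preconditioner $M_j\approx\mathcal{L}_{UU}$.
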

\begin{proof}
The equation $\mathcal{L}_X(X,U)=0$ is always solvable with respect to $X$
by the forward recursion for $x_i$ and backward recursion for $\lambda_i$.
Let us denote its solution by $X=g(U)$. Then $F[U]=\mathcal{L}_U(g(U),U)$ and
$F_U=\mathcal{L}_{UU}(g(U),U)+\mathcal{L}_{UX}(g(U),U)g_U$.
Differentiation of the identity $\mathcal{L}_U(g(U),U)=0$ with respect to $U$
gives the identity $\mathcal{L}_{UU}(g(U),U)+\mathcal{L}_{UX}(g(U),U)g_U(U)=0$.
Differentiation of the identity $\mathcal{L}_X(g(U),U)=0$ with respect to $U$
gives the identity $\mathcal{L}_{XU}(g(U),U)+\mathcal{L}_{XX}(g(U),U)g_U(U)=0$.
Hence $g_U=-\mathcal{L}_{XX}^{-1}(g(U),U)\mathcal{L}_{XU}(g(U),U)$ and
\begin{align}
F_U[U] =& \mathcal{L}_{UU}(g(U),U)\label{e7a}\\
&{}-\mathcal{L}_{UX}(g(U),U)\mathcal{L}_{XX}^{-1}(g(U),U)\mathcal{L}_{XU}(g(U),U),\notag
\end{align}
which is the Schur complement of the symmetric Hessian matrix of $\mathcal{L}$
at the point $(X,U)=(g(U),U)$. The Schur complement of any symmetric matrix is
symmetric.
\end{proof}

\section{Continuation algorithm}
\label{sec:algo}

The controlled system is sampled on a uniform time grid $t_j=j\Delta t$, $j=0,1,\ldots$.
Solution of equation (\ref{e7}) must be found at each time step $t_j$ on the controller board,
which is a challenging part of implementation of NMPC.

Let us denote $x_j=x(t_j)$, $U_j=U(t_j)$, and rewrite the equation $F[U_j,x_j,t_j]=0$
equivalently in the form
\[
F[U_j,x_j,t]-F[U_{j-1},x_j,t_j]=b_j,
\]
where
\begin{equation}\label{e8}
b_j=-F[U_{j-1},x_j,t_j].
\end{equation}

Using a small $h$, which may be different from $\Delta t$ and $\Delta\tau$,
we introduce the forward difference operator
\begin{eqnarray}\label{e9}
a_j(V)=(F[U_{j-1}+hV,x_j,t_j]-F[U_{j-1},x_j,t_j])/h.
\end{eqnarray}
We note that the equation $F[U_j,x_j,t_j]=0$ is equivalent to the equation
$a_j(\Delta U_j/h)=b_j/h$, where $\Delta U_j=U_j-U_{j-1}$.

Let us denote the $k$-th column of the $m\times m$ identity matrix by $e_k$,
where $m$ is the dimension of the vector $U$, and define an $m\times m$ matrix $A_j$
with the columns $A_je_k$, $k=1,\ldots,m$, given by the formula $A_je_k=a_j(e_k)$.
The matrix $A_j$ is an $O(h)$ approximation of the Jacobian matrix
$F_U[U_{j-1},x_j,t_j]$. The Jacobian matrix $F_U$ is symmetric by Theorem~\ref{th1}.

Suppose that an approximate solution $U_0$ to the equation $F[U_0,x_0,t_0]=0$ is
available. The first block entry of $U_0$ is then taken as the control $u_0$
at the state $x_0$. The next state $x_1=x(t_1)$ is either sensor estimated or computed
by the formula $x_1=x_0+f(t_0,x_0,u_0)\Delta t$; cf. (\ref{e1}).

At the time $t_j$, $j>1$, we have the state $x_j$ and the vector $U_{j-1}$
from the previous time $t_{j-1}$. Our goal is to solve the following equation
with respect to $V$:
\begin{equation}\label{e11}
 a_j(V)=b_j/h.
\end{equation}
Then we set $\Delta U_j=hV$, $U_j=U_{j-1}+\Delta U_j$ and choose the first block
component of $U_j$ as the control $u_j$. The next system state $x_{j+1}=x(t_{j+1})$
is either sensor estimated or computed by the formula $x_{j+1}=x_j+f(t_j,x_j,u_j)\Delta t$.

A direct way to solve (\ref{e11}) is generating the matrix $A_j$
and then solving the system of linear equations $A_j\Delta U_j=b_j$;
e.g., by the Gaussian elimination.

A less expensive alternative is solving (\ref{e11}) by the GMRES method,
where the operator $a_j(V)$ is used without explicit construction of
the matrix $A_j$; cf., \cite{Kel:95,Oht:04}.

\section{Preconditioned GMRES}
\label{sec:gmres}

We recall that, for a given system of linear equations $Ax=b$ and initial approximation $x_0$,
GMRES constructs orthonormal bases of the Krylov subspaces
$\mathcal{K}_n=\text{span}\{r_0,Ar_0,\ldots,A^{n-1}r_0\}$,
$n=1,2,\ldots$, given by the columns of matrices $Q_n$,
such that $AQ_n=Q_{n+1}H_{n}$ with the upper Hessenberg matrices~$H_n$
and then searches for approximations to the solution $x$ in the form $x_n=Q_ny_n$,
where $y_n=\text{argmin}\|AQ_ny_n-b\|_2$.

The convergence of GMRES may stagnate for an ill-conditioned matrix $A$.
The convergence can be improved by preconditioning. A matrix $M$ that is close
to the matrix $A$ and such that computing $M^{-1}r$ for an arbitrary vector $r$
is relatively easy, is referred to as a preconditioner. The preconditioning
for the system of linear equations $Ax=b$ with the preconditioner $M$ formally
replaces the original system $Ax=b$ with the equivalent preconditioned linear system
$M^{-1}Ax=M^{-1}b$. If the condition number $\|M^{-1}A\|\|A^{-1}M\|$
of the matrix $M^{-1}A$ is small, convergence of
iterative solvers for the preconditioned system can be fast.

A typical implementation of the preconditioned GMRES is given by Algorithm \ref{a1}.
GMRES without preconditioning is the same algorithm with $z=r$. In the pseudocode,
we denote by $H_{i_1:i_2,j_1:j_2}$ the submatrix of $H$ with the entries $H_{ij}$
such that $i_1\leq i\leq i_2$ and $j_1\leq j\leq j_2$.

\begin{algorithm}
\caption{Preconditioned GMRES($k_{\max}$)}
\begin{algorithmic}[1]\label{a1}
\REQUIRE $a(v)$, $b$, $x_0$, $k_{\max}$, $M$
\ENSURE Solution $x$ of $a(x)=b$
\STATE $r=b-a(x_0)$, $z=M^{-1}r$, $\beta=\|z\|_2$, $v_1=z/\beta$
\FOR{$k=1,\ldots,k_{\max}$}
\STATE $r=a(v_k)$, $z=M^{-1}r$
\STATE $H_{1:k,k}=[v_1,\ldots,v_k]^Tz$
\STATE $z=z-[v_1,\ldots,v_k]H_{1:k,k}$
\STATE $H_{k+1,k}=\|z\|_2$
\STATE $v_{k+1}=z/\|z\|_2$
\ENDFOR
\STATE $y=\mbox{arg min}_y\|H_{1:k_{\max}+1,1:k_{\max}}y-[\beta,0,\dots,0]^T\|_2$
\STATE $x=x_0+[v_1,\ldots,v_{k_{\max}}]y$
\end{algorithmic}
\end{algorithm}

It is a common practice to compute the LU factorization $M=LU$ by the Gaussian
elimination and then compute the vector $M^{-1}r$ by the rule
$M^{-1}r=U^{-1}(L^{-1}r)$.

\section{Sparse preconditioner}
\label{sec:sprec}

Our finite horizon model prediction problem allows us to construct sparse preconditioners
$M_j$ with a particular structure. These preconditioners are highly efficient,
which is confirmed by the numerical experiments described below.

We first observe that the states $x_i$, computed by the forward recursion, and
the costates $\lambda_i$, computed by the subsequent backward recursion, satisfy,
in practice, the following property: $\partial x_{i_1}/\partial u_{i_2}=O(\Delta\tau)$,
$\partial\lambda_{i_1}/\partial u_{i_2}=O(\Delta\tau)$,
$\partial x_{i_1}/\partial\mu_{i_2}=0$ and
$\partial\lambda_{i_1}/\partial\mu_{i_2}=O(\Delta\tau)$.
It is a corollary of theorems about the derivatives of solutions of ordinary
differential equations with respect to a parameter; see, e.g.,~\cite{Pon:62}.

Now we assume that the predicted states $x_i$ and costates $\lambda_i$ are computed
by the forward and backward recursions for the vector $U_{j-1}$ at the
current system state $x_j=x(t_j)$ during computation of the right-hand side vector
$b_j$ and use the predicted $x_i$ and $\lambda_i$ to form the blocks
$H_{uu}$, $H_{u\mu}$, $H_{\mu u}$, $H_{\mu\mu}$ of the symmetric matrix
\[
M_j = \left[\begin{array}{ccc}H_{uu}(U_{j-1},x_j,t_j)&H_{u\mu}(U_{j-1},x_j,t_j)&
M_{13}\\H_{\mu u}(U_{j-1},x_j,t_j)&H_{\mu\mu}(U_{j-1},x_j,t_j)&M_{23}\\
M_{31}&M_{32}&M_{33}\end{array}\right],
\]
where $[M_{31},M_{32},M_{33}]$ coincides with the last $l$ rows of
$A_j$. The integer $l$ denotes the sum of dimensions of $\psi$ and $p$.

In the notation of Theorem~\ref{th1}, the above construction is explained as follows.
We discard the second term in formula~\ref{e7a} and use
the truncated expression $F_U[U]=\mathcal{L}_{UU}(g(U),U)$
for the entries of $M_j$ apart from the last $l$ columns and last $l$ rows.
The last $l$ columns are computed exactly, the last $l$ rows equal the transposed
last $l$ columns because of the symmetry of $M_j$. The possibility to
use the truncated expression is due to the above observation that
$\partial x_{i_1}/\partial u_{i_2}=O(\Delta\tau)$,
$\partial\lambda_{i_1}/\partial u_{i_2}=O(\Delta\tau)$,
$\partial x_{i_1}/\partial\mu_{i_2}=0$,
$\partial\lambda_{i_1}/\partial\mu_{i_2}=O(\Delta\tau)$.
Moreover, the norm of
$A_j-M_j$ is of order $O(\Delta\tau)$.

The matrix $M_j$ is sparse since the blocks
$H_{uu}$, $H_{u\mu}$, $H_{\mu u}$, $H_{\mu\mu}$
are block diagonal and $l$ is small.
The particular structure of $M_j$ is convenient for efficient LU factorization.
It is possible to simultaneously permute the rows and columns of $M_j$
and to obtain an arrow-like pattern of nonzero elements, which admits
a fast LU factorization. A representative example of the sparse preconditioners
$M_j$ and their LU factorization is given in the next section. 

As a result, the setup of $M_j$, computation of its LU factorization, and application
of the preconditioner all cost $O(N)$ floating point operations.
The memory requirements are also of order $O(N)$.

\section{Example}
\label{sec:ex}

We consider a test nonlinear problem, which describes the minimum-time motion from a state
$(x_0,y_0)$ to a state $(x_f,y_f)$ with an inequality constrained control:
\begin{itemize}
\item State vector
 $\vec{x}=\left[\begin{array}{c}x\\y\end{array}\right]$ and
input control $\vec{u}=\left[\begin{array}{c}u\\u_d\end{array}\right]$.
\item Parameter variables $\vec{p}=[t_f]$, where $t_f$ denotes the length
of the evaluation horizon.
\item Nonlinear dynamics is governed by the system of ODE
\[
\dot{\vec{x}}=f(\vec{x},\vec{u},\vec{p})=
\left[\begin{array}{c}(Ax+B)\cos u\\(Ax+B)\sin u\end{array}\right].
\]
\item Constraints: $C(\vec{x},\vec{u},\vec{p})=[(u-c_{u})^2+u_d^2-r_{u}^2]=0$,
where $c_u=c_0+c_1\sin(\omega t)$, i.e., the control $u$ always stays within
the curvilinear band $c_{u}-r_{u}\leq u\leq c_{u}+r_{u}$).
\item Terminal constraints: $\psi(\vec{x},\vec{p})=\left[\begin{array}{c}
x-x_f\\y-y_f\end{array}\right]=0$ (the state should pass through the point
$(x_f,y_f)$ at $t=t_f$)
\item Objective function to minimize:
\[
J=\phi(\vec{x},\vec{p})+\int_t^{t+t_f}L(\vec{x},\vec{u},\vec{p})dt,
\]
where
\[
\phi(\vec{x},\vec{p})=t_f,\quad L(\vec{x},\vec{u},\vec{p})=-w_{d}u_d
\]
(the state should arrive at $(x_f,y_f)$ in the shortest time;
the function $L$ serves to stabilize the slack variable $u_d$)
\item Constants: $A=B=1$, $x_0=y_0=0$, $x_f=y_f=1$,
$c_0=0.8$, $c_1=0.3$, $\omega=20$, $r_{u}=0.2$, $w_{d}=0.005$.
\end{itemize}
The horizon $[t,t+t_f]$ is parameterized by the affine mapping
$\tau\to t+\tau t_f$ with $\tau\in[0,1]$.

The components of the corresponding discretized problem on the horizon are given below:
\begin{itemize}
\item $\Delta\tau=1/N$, $\tau_i=i\Delta\tau$,
$c_{ui}=c_0+c_1\sin(\omega(t+\tau_ip))$;
\item the participating variables are the state $\left[\begin{array}{c}
x_i\\y_i\end{array}\right]$, the costate $\left[\begin{array}{c}
\lambda_{1,i}\\\lambda_{2,i}\end{array}\right]$, the control $\left[\begin{array}{c}
u_{i}\\u_{di}\end{array}\right]$, the Lagrange multipliers
$\mu_i$ and $\left[\begin{array}{c}\nu_{1}\\\nu_{2}\end{array}\right]$,
the parameter $p$;
\item the state is governed by the model equation
\[
\left\{\begin{array}{l} x_{i+1}=x_i+\Delta\tau\left[p\left(Ax_{i}+B\right)\cos u_{i}\right],\\
y_{i+1}=y_i+\Delta\tau\left[p\left(Ax_{i}+B\right)\sin u_{i}\right],\end{array}\right.
\]
where $i=0,1,\ldots,N-1$;
\item the costate is determined by the backward recursion ($\lambda_{1,N}=\nu_1$,
$\lambda_{2,N}=\nu_2$)
\[
\left\{\begin{array}{l} \lambda_{1,i}=\lambda_{1,i+1}\\
\hspace{2.5em}{}+\Delta\tau\left[pA(\cos u_i \lambda_{1,i+1}+\sin u_i\lambda_{2,i+1})\right],\\
\lambda_{2,i} = \lambda_{2,i+1},\end{array}\right.
\]
where $i=N-1,N-2,\ldots,0$;
\item the equation $F(U,x_0,y_0,t)=0$, where
\begin{eqnarray*}
\lefteqn{U=[u_0,u_{d,0},\ldots,u_{N-1},u_{d,N-1},}\hspace*{8em}\\
&&\mu_0,\ldots,\mu_{N-1},\nu_1,\nu_2,p],
\end{eqnarray*}
has the following rows from the top to bottom:
\[
\left\{\begin{array}{l}
\Delta\tau\left[p(Ax_i+B)\left(-\sin u_i\lambda_{1,i+1}+
\cos u_i\lambda_{2,i+1}\right)\right.\\
\hspace*{11em}\left.{}+2\left(u_i-c_{ui}\right)\mu_i\right] = 0 \\
\Delta\tau\left[2\mu_iu_{di}-w_{d}p\right] = 0 \end{array}\right.
\]
\[
\left\{\Delta\tau\left[(u_i-c_{ui})^{2}+u_{di}^2-r_{u}^2\right]=0
\right.\hspace*{7em}
\]
\[
\left\{\begin{array}{l} x_N-x_r=0\\y_N-y_r=0\end{array}\right.\hspace{15em}
\]
\[
\left\{\begin{array}{l}\Delta\tau[\sum\limits^{N-1}_{i=0}
(Ax_i+B)(\cos u_i\lambda_{1,i+1}+\sin u_i\lambda_{2,i+1})\\
\hspace{1em}{}-2(u_i-c_{ui})\mu_ic_1\cos(\omega(t+\tau_ip))\omega\tau_i\\
\hspace{8em}-w_du_{di}]+1 = 0.\end{array}\right.
\]
\end{itemize}

The matrices $A_j$ have the sparsity structure as in Fig.~\ref{fig4}. 
The preconditioner $M_j$ is the symmetric matrix
\[
M_j = \left[\begin{array}{ccccc}M_{11}&0&M_{13}&M_{14}&M_{15}\\
0&M_{22}&M_{23}&0&M_{25}\\M_{31}&M_{32}&0&0&M_{35}\\
M_{41}&0&0&0&M_{45}\\M_{51}&M_{52}&M_{53}&M_{45}&M_{55}\end{array}\right]
\]
having the diagonal blocks $M_{11}$, $M_{13}=M_{31}^T$, $M_{22}$,
$M_{23}=M_{32}^T$. The diagonal entries of $M_{11}$ equal
$\Delta\tau[2\mu_i-p(Ax_i+B)(\cos u_i\lambda_{1,i+1}+\sin u_i\lambda_{2,i+1})]$.
The diagonal entries of $M_{22}$ equal $\Delta\tau2\mu_i$.
The diagonal entries of $M_{13}$ equal $\Delta\tau2(u_i-c_{ui})$.
The diagonal entries of $M_{23}$ equal $\Delta\tau2u_{di}$.
The entries of the vector $M_{15}$ equal $\Delta\tau(Ax_i+B)
(-\sin u_i\lambda_{1,i+1}+\cos u_i\lambda_{2,i+1})-\Delta\tau2\mu_i
c_1\cos(\omega(t+\tau_ip))\omega\tau_i$. The entries of the vector
$M_{25}$ equal $-\Delta\tau w_d$. The entries of the vector $M_{35}$
equal $-2\Delta\tau(u_i-c_{ui})c_1\cos(\omega(t+\tau_i p))\omega\tau_i$.

The blocks $M_{14}$, $M_{45}$, and $M_{55}$ equal to the respective
blocks of $A$ and have to be computed exactly. The sparsity pattern of $M_j$
is displayed in Fig.~\ref{fig4}.

To compute the LU factorization of $M_j$ with $O(N)$ floating point operations, we
first repartition $M_j$ as
\[
M_j=\left[\begin{array}{ccccc}K_{11}&K_{12}\\K_{21}&K_{22}\end{array}\right],
K_{11}=\left[\begin{array}{ccccc}M_{11}&0&M_{13}\\
0&M_{22}&M_{23}\\M_{31}&M_{32}&0\end{array}\right],
\]
where $K_{11}$ is usually nonsingular. Using the representation
\[
K_{11}^{-1}=\left[\begin{array}{rrr}M_{23}M_{32}&-M_{13}M_{32}&M_{13}M_{22}\\
-M_{23}M_{31}&M_{13}M_{31}&M_{11}M_{23}\\M_{22}M_{31}&M_{11}M_{32}&-M_{11}M_{22}
\end{array}\right]\\
\]
\[
\times\left[\begin{array}{rrr}D\\&D\\&&D\end{array}\right],
\]
where $D=(M_{11}M_{23}M_{32}+M_{13}M_{22}M_{31})^{-1}$, we obtain
the block triangular factors
\[
L = \left[\begin{array}{cc}I&0\\K_{21}K^{-1}_{11}&I\end{array}\right],\;
U = \left[\begin{array}{cc}K_{11}&K_{12}\\0&S_{22}\end{array}\right],
\]
where $S_{22}=K_{22}-K_{21}K_{11}^{-1}K_{12}$.
The application of the preconditioner costs $O(N)$ operations.

An alternative construction of the LU factorization uses a suitable simultaneous
permutation of the rows and columns of $M_j$ with the permutation indices
$1,1+N,1+2N$,\ldots, $i,i+N,i+2N$,\ldots,$N,2N,3N,1+3N,2+3N,3+3N$.
The sparsity patterns of the permuted matrix and its lower triangular factor
$L$ are displayed in Fig.~\ref{fig5}, the sparsity pattern of the upper
triangular factor $U$ is the transpose of that of the factor $L$.

\section{Numerical results}
\label{sec:numer}

In our numerical experiments, carried out in MATLAB, the system of weakly nonlinear
equations (\ref{e11}) for the test problem from Section~\ref{sec:ex} is solved
by the GMRES method. The error tolerance in GMRES is $tol=10^{-5}$.
The number of grid points on the horizon is $N=100$, the sampling time
of the simulation is $\Delta t=1/500$, and $h=10^{-8}$. 

The sparse preconditioners for GMRES are constructed as in Section~\ref{sec:ex}, 
and the LU factorization is computed as proposed in the last paragraph
of Section~\ref{sec:ex}.

Figure \ref{fig1} shows the computed trajectory for the test example and
Figure \ref{fig2} shows the optimal control by the MPC approach using
GMRES with preconditioning.

GMRES with preconditioning executes only 2 iterations at each step
while keeping $\|F\|_2$ close to $10^{-4}$.
For comparison, we show the number of iterations in GMRES without preconditioning
in Figure \ref{fig3}, which is 4-14 times larger. 

\begin{figure}[ht]
\noindent\centering{
\includegraphics[width=\linewidth]{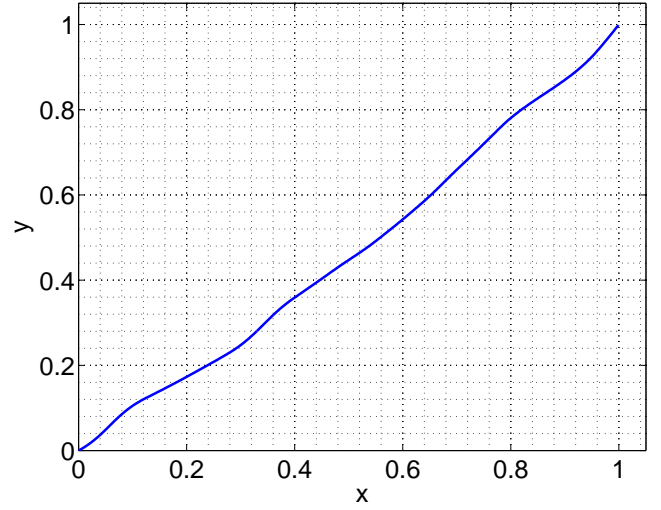}
}
\caption{Trajectory by NPMC using GMRES with preconditioning}
\label{fig1}
\end{figure}
 
\begin{figure}[ht]
\noindent\centering{
\includegraphics[width=\linewidth]{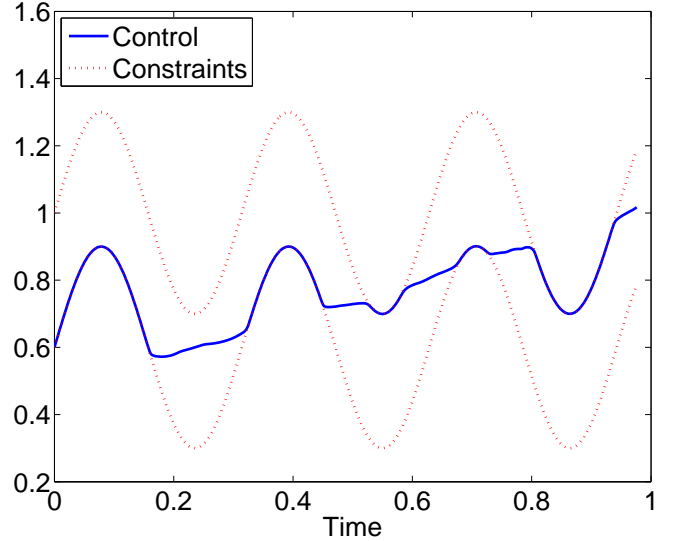}
}
\caption{NMPC control $u$ using GMRES with preconditioning}
\label{fig2}
\end{figure}

\begin{figure}[ht]
\noindent\centering{
\includegraphics[width=\linewidth]{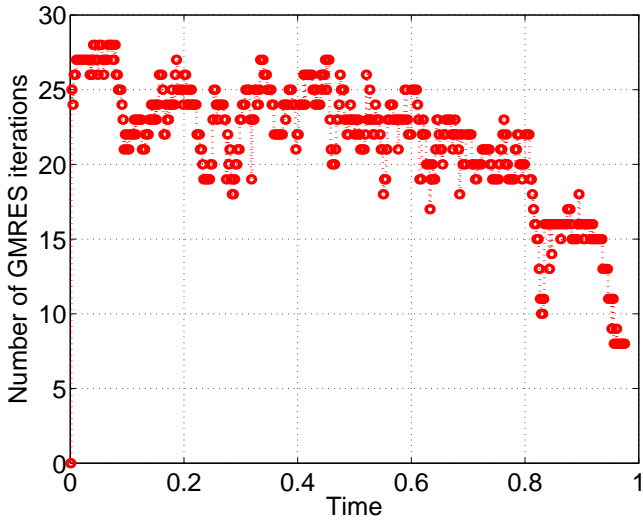}
}
\caption{The number of GMRES iterations without preconditioning,
$N=100$, $\Delta t=1/500$, $k_{\max}=100$}
\label{fig3}
\end{figure}

\begin{figure}[ht]
\noindent\centering{
\includegraphics[width=12em]{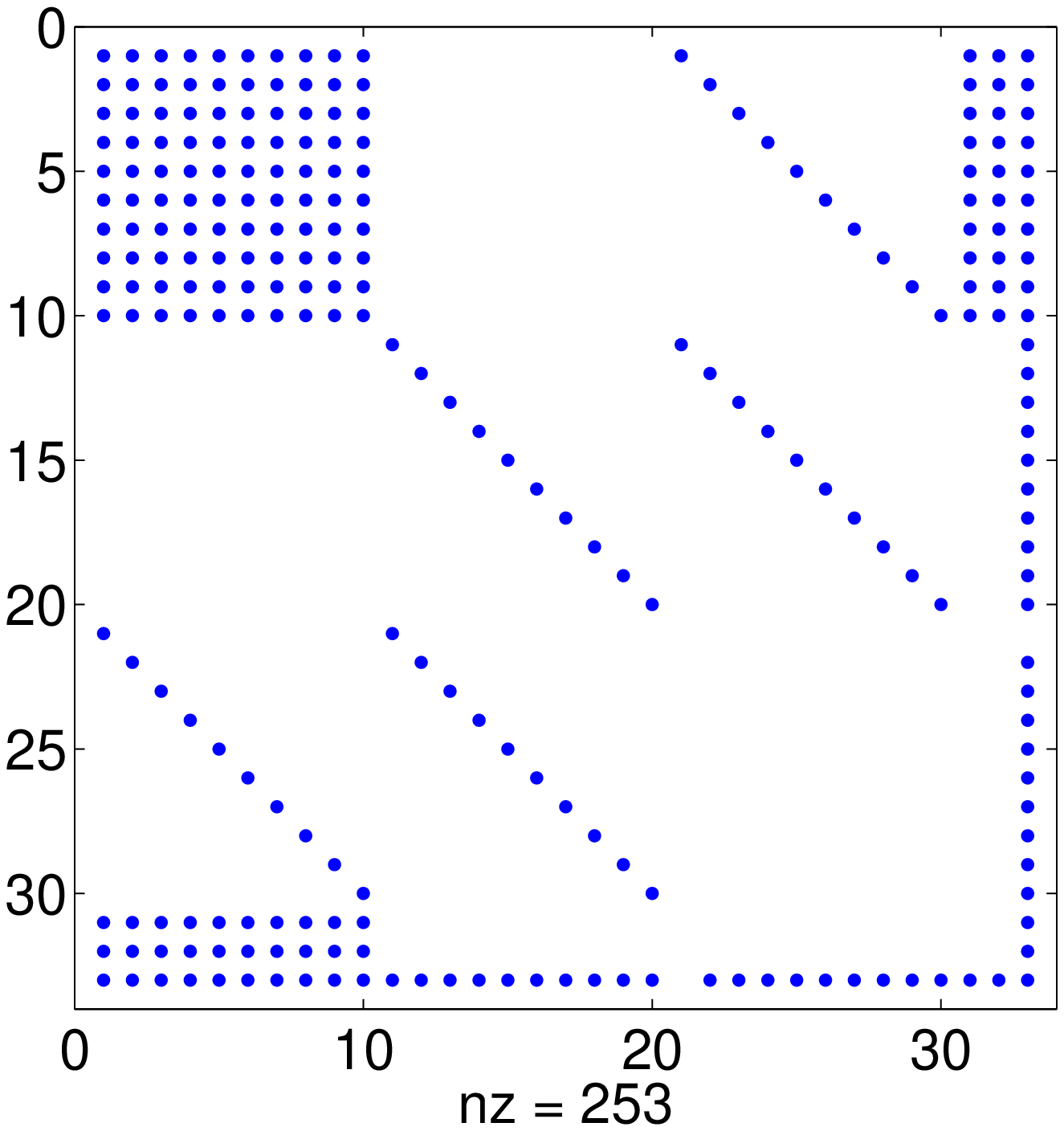}\hfill
\includegraphics[width=12em]{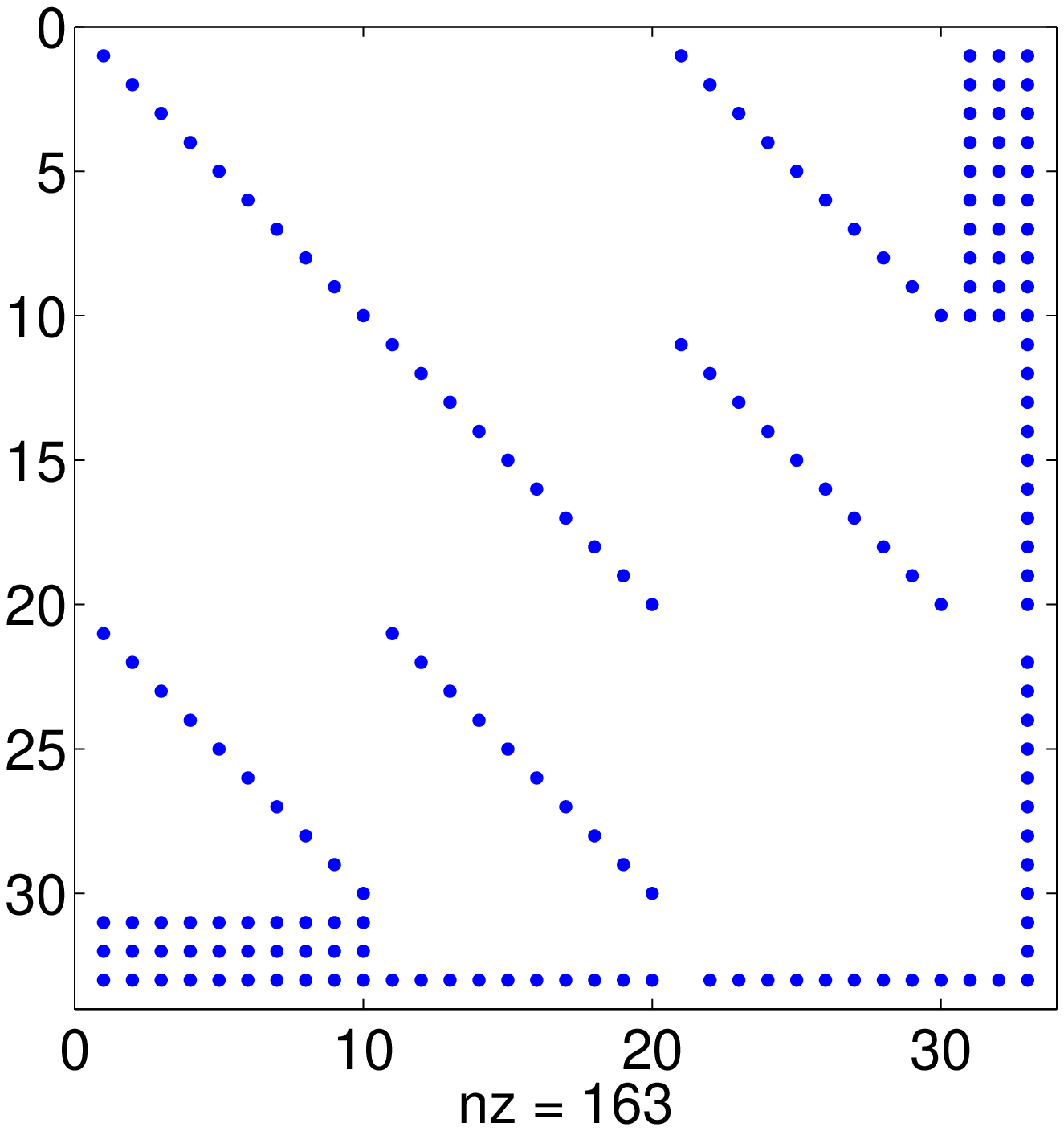}
}
\caption{Sparsity patterns of the Jacobian $F_U$ and preconditioner $M$}
\label{fig4}
\end{figure}

\begin{figure}[ht]
\noindent\centering{
\includegraphics[width=12em]{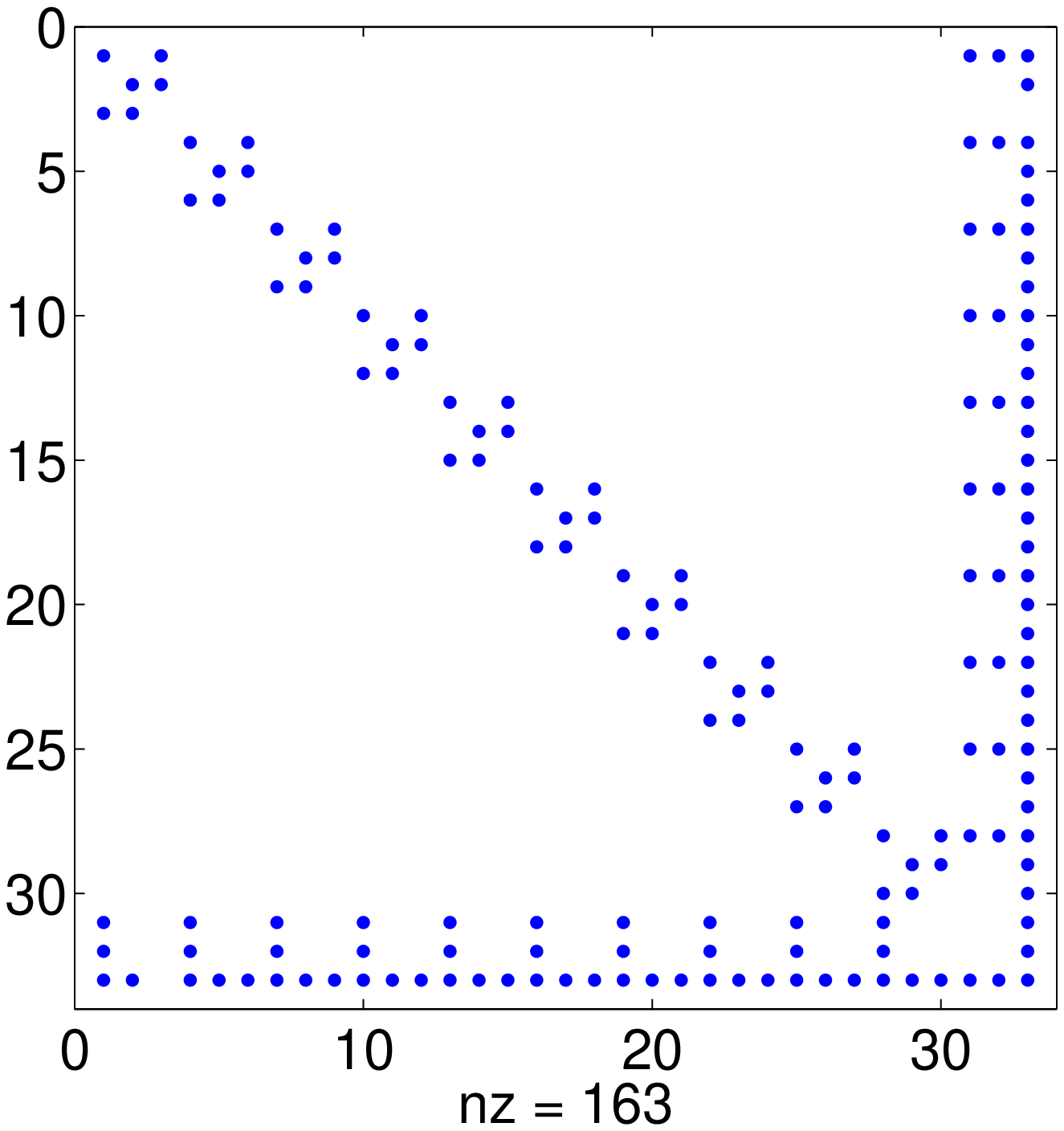}\hfill
\includegraphics[width=12em]{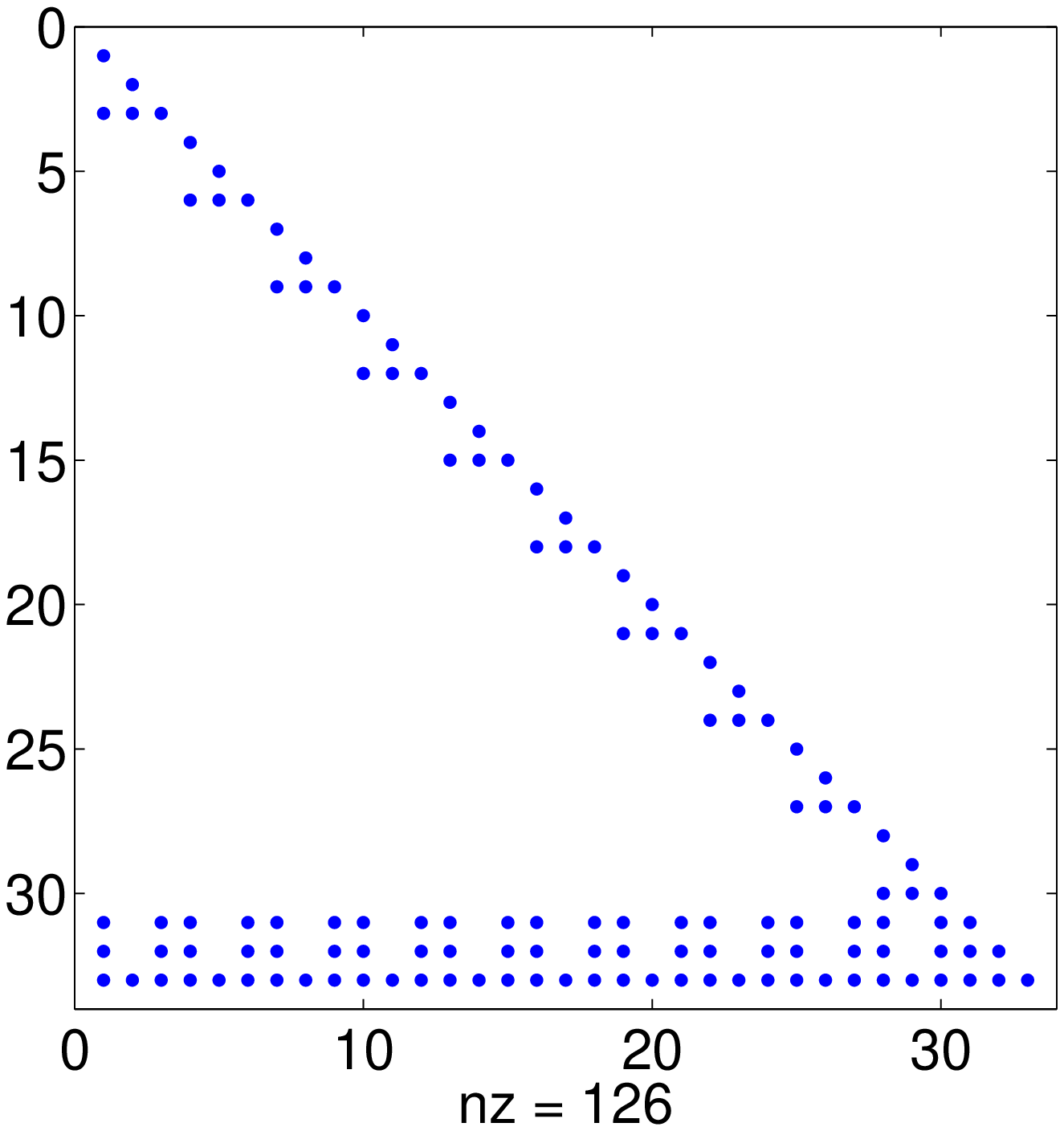}
}
\caption{Sparsity patterns of the permuted preconditioner and
the lower triangular factor $L$}
\label{fig5}
\end{figure}

\section{Conclusion}
\label{sec:concl}

We propose an efficient sparse preconditioner for the Continuation/GMRES method
for nonlinear MPC problems. The arithmetical cost of preconditioning is $O(N)$,
memory storage is $O(N)$, where $N$ is the number of gridpoints on the prediction horizon.

\vfill\pagebreak

\pagebreak

\end{document}